\newtheorem{theorem}{Theorem}[section]
\newtheorem{lemma}[theorem]{Lemma}
\newtheorem{corollary}[theorem]{Corollary}
\newtheorem{remark}[theorem]{Remark}
\newcommand{\modu}{\mathrm{mod}}
\newcommand{\ellk}{k}
\newcommand{\GA}{A}
\newcommand{\Z}{\mathbb{Z}}
\newcommand{\Q}{\mathbb{Q}}
\newcommand{\Hom}{\mathrm{Hom}}
\newcommand{\Rep}{{\textsc{Rep}}}
\newcommand{\C}{\mathbb C}
\newcommand{\cc}{\mathcal C}
\newcommand{\zz}{\mathcal Z}
\begin{document}
\title{The Tambara-Yamagami categories and 3-manifold invariants}
\author[V. Turaev]{Vladimir Turaev}
 \address{%
 Vladimir Turaev\newline
  \indent            Department of Mathematics \newline
\indent  Indiana University \newline
                     \indent Bloomington IN47405 \newline
                     \indent USA \newline
\indent e-mail: vtouraev@indiana.edu}
\author[L. Vainerman]{Leonid Vainerman}
\address{Leonid Vainerman\newline
     \indent         Department of Mathematics \newline
\indent    University of Caen\newline
                     \indent 14000 Caen \newline
                     \indent France \newline
\indent e-mail:  leonid.vainerman@math.unicaen.fr}
\subjclass[2000]{57M27,16W30,18C20}
\date{\today}

\begin{abstract}
We prove that if two Tambara-Yamagami categories
$\mathcal{TY}(A,\chi,\nu)$ and $\mathcal{TY}(A',\chi',\nu')$  give
rise to the same state sum invariants of 3-manifolds and the order
of one of the groups $A, A'$ is odd, then $\nu=\nu'$ and  there is a
group isomorphism $A\approx A'$ carrying $\chi$ to $\chi'$.     The
proof is based on an explicit computation of the state sum
invariants for the lens spaces of type $(k,1)$.
\end{abstract}
\maketitle

\section*{Introduction}
\date{July 26, 2010}

One of the fundamental achievements of quantum topology was a discovery of
a  non-trivial connection  between monoidal categories and 3-manifolds. This connection was first observed by O. Viro and V. Turaev   and later generalized  in the papers of    J. Barrett, B. Westbury, A. Ocneanu, S. Gelfand,   D. Kazhdan and others. Their results  may be summarized by
saying that every spherical fusion category $\cc$ over $\mathbb C$ with $\dim (\cc)\neq 0$ gives rise to   a    topological invariant $\vert
M\vert_{\cc}\in \mathbb C$ of any closed oriented 3-dimensional  manifold $M$. A prototypical example of a spherical fusion category is   the category $\Rep (G)$ of finite-dimensional complex representations of a finite group $G$.  This category  allows nice operations on objects and morphisms: direct sums, tensor products, left and right dualization. Moreover,  $\Rep (G)$ 
 contains a finite family of \lq\lq simple" objects (=  irreducible representations) such that all objects split as  direct sums of the objects of this family.  Certainly, the sets of morphisms in $\Rep (G)$ are  finite-dimensional complex vector spaces.  Axiomatizing these properties, one obtains    a notion of a  fusion category, see  \cite{ENO}. The condition of sphericity \cite{BW2}  on a  fusion category $\cc$    says that  the \lq\lq left" and   \lq\lq right"   dimensions of   the objects of $\cc$ are equal.   The resulting $\mathbb C$-valued dimension of objects of $\cc$ is   preserved under isomorphisms in~$\cc$.  This allows one to define the dimension $\dim (\cc)$ of a  spherical fusion category $\cc$  as the
sum of the squares of the dimensions of the  isomorphism classes of 
simple objects. For $\cc=\Rep (G)$, the dimension of objects is the usual dimension of the underlying vector spaces and  $\dim (\Rep (G))=\vert G\vert$.  The class of spherical fusion categories includes the categories of type $\Rep (G)$   and many other categories some of which will be discussed below. 
 The class of spherical fusion categories is believed to be \lq\lq big  but not too big" so that one may hope for some kind of classification.

  The  invariant of a 3-manifold $M$ associated with $ \Rep (G)$ is nothing but the number of homomorphisms from the fundamental group of $M$ to $G$.   In general, the  invariant $\vert
M\vert_{\cc}$ associated  with a  spherical fusion category $\cc$ is not determined by  the fundamental group.  The definition of~$\vert M\vert_{\cc}$ is rather subtle and proceeds in terms of   state sums on a triangulation of~$M$.
The key algebraic ingredients of these state sums are the so-called
$6j$-symbols associated with~$\cc$.  

The formula 
$(M,\cc)\mapsto \vert M\vert_{\cc}$ defines  a pairing  between homeomorphism classes of closed oriented 3-manifolds and
spherical fusion categories of non-zero dimension. A study of this pairing leads to
natural questions both in algebra and topology. One usually studies  the topological aspects. Is the pairing $(M,\cc)\mapsto \vert M\vert_{\cc}$ sufficiently strong to distinguish the 3-sphere from   other 3-manifolds? (The answer is \lq\lq yes"). 
Is it sufficiently strong to distinguish arbitrary 3-manifolds up to homeomorphism? 
(The answer is \lq\lq no", see \cite{Fun}). 

We shall  focus on   algebraic questions and specifically on the following reconstruction problem:    To what extent a spherical fusion
category can be reconstructed from  the associated 3-manifold invariants? The rational for this problem is that the  number  $\vert
M\vert_{\cc}$ may be viewed as a generalized dimension  of $\cc$ determined by $M$.  The reconstruction problem is intriguing already for the categories of type $\Rep (G)$. Is it true that for any non-isomorphic finite groups $G_1, G_2$, there is a closed oriented 3-manifold $M$ such that the numbers of homomorphisms from $\pi_1(M)$ to $G_1$ and $G_2$ are different? We do not know the answer. 

In this paper, we
study   the reconstruction problem  for  an interesting class of spherical fusion
categories  recently introduced by Tambara and Yamagami \cite{TY}.
A   Tambara-Yamagami   category
$\mathcal{TY}(A,\chi,\nu)$ is determined by   a bicharacter $\chi$
on a finite abelian group $A$   and   a sign $\nu=\pm 1$. By a {\it
bicharacter} on $A$ we mean a  non-degenerate symmetric bilinear
pairing $\chi:A\times A\longrightarrow S^1$; the non-degeneracy of
$\chi$ means that the adjoint homomorphism $A\to \Hom (A, S^1)$ is
bijective. The pair $(A, \chi)$ will be called a {\it bicharacter
pair}.

Two bicharacter pairs $(A, \chi)$ and $(A', \chi')$   are said to be
isomorphic if there is an isomorphism $A \cong  A' $ transforming
$\chi $ into $\chi' $. It is known that two Tambara-Yamagami
categories, $\mathcal{TY}(A,\chi,\nu)$ and
$\mathcal{TY}(A',\chi',\nu')$, are monoidally
 equivalent if and only if  the pairs $(A,\chi)$ and $(A',\chi')$ are isomorphic and $\nu=\nu'$.

Each bicharacter pair  $(A, \chi)$ splits uniquely as an orthogonal sum
$$(A,\chi)= \bigoplus_{p} (A^{(p)}, \chi^{(p)}),$$
where $p$ runs over all prime natural numbers,
$A^{(p)}\subset A$ is the abelian $p$-group consisting of the elements of $A$ annihilated by a sufficiently big power of
$p$, and $\chi^{(p)}:A^{(p)}\times A^{(p)} \longrightarrow S^1$ is the restriction of $\chi$ to $A^{(p)}$.
 In the sequel, the order of a group $A$ is denoted $\vert A\vert$.

\begin{theorem}\label{thm-one}
Let $\cc=\mathcal{TY}(A,\chi,\nu)$ and $\cc'=\mathcal{TY}(A',\chi',\nu')$ be two Tambara-Yamagami categories such that $\vert M \vert_{\cc}= \vert M \vert_{\cc'}$ for all closed oriented 3-manifolds~$M$.

(a) We have $\vert A\vert=\vert A'\vert$ and if $\vert A\vert$ is not a positive power of $4$, then $\nu=\nu'$.

(b) For every odd prime $p$,  the pairs $(A^{(p)}, \chi^{(p)})$ and $(A'^{(p)}, \chi'^{(p)})$ are isomorphic.
\end{theorem}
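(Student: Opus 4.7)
The plan is to follow the strategy announced in the abstract: compute the state sum invariant $\vert L(k,1)\vert_\cc$ as an explicit function of $(A,\chi,\nu,k)$, and then extract $\vert A\vert$, $\nu$, and the odd\nobreakdash-primary components of $\chi$ from the resulting sequence as $k$ varies.

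First I would fix a small triangulation of $L(k,1)$ and evaluate the state sum using the simple objects of $\mathcal{TY}(A,\chi,\nu)$ --- the elements of $A$, each of dimension~$1$, and one distinguished object $m$ of dimension $\sqrt{\vert A\vert}$ --- together with the Tambara--Yamagami $6j$\nobreakdash-symbols. Admissible colourings split into two classes. Those using only elements of $A$ contribute a term depending only on $\vert A\vert$. Those using $m$ at an even number of edges contribute a term of the shape $\nu^{n}\,C(k,\vert A\vert)\,G_k(A,\chi)$, where
$$G_k(A,\chi) \;=\; \sum_{a\in A}\chi(a,a)^k$$
is the $k$\nobreakdash-th Gauss sum of the quadratic form $q(a)=\chi(a,a)$ and $C(k,\vert A\vert)$ is an explicit rational function of $k$ and $\vert A\vert$ alone. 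The exponent $k$ appears inside $\chi(a,a)^k$ because the meridian of $L(k,1)$ wraps $k$ times around the longitude and, in the relevant colourings, the $m$\nobreakdash-edges force an $A$\nobreakdash-coloured edge to traverse this winding; this is where the topology of $L(k,1)$ enters the state sum.

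Next, I would read off $\vert A\vert$ from $\vert S^3\vert_\cc = 1/\dim(\cc) = 1/(2\vert A\vert)$ and then, with $\vert A\vert$ fixed, recover the whole sequence $\{\nu^{n}C(k,\vert A\vert)G_k(A,\chi)\}_{k\ge 1}$ from the invariants $\vert L(k,1)\vert_\cc$. Provided $C(k,\vert A\vert)\ne 0$ for some~$k$, this simultaneously determines $\nu$ and every Gauss sum $G_k(A,\chi)$; the hypothesis that $\vert A\vert$ is not a positive power of~$4$ is exactly the condition that makes this coefficient non\nobreakdash-zero, which explains the exception in part~(a). For part~(b) I would use the orthogonal primary splitting $G_k(A,\chi)=\prod_p G_k(A^{(p)},\chi^{(p)})$ and isolate the odd\nobreakdash-primary factor $G_k(A^{(p)},\chi^{(p)})$ by restricting $k$ to residues modulo the exponent of $A$ where the other factors become trivial. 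The classification of non\nobreakdash-degenerate symmetric bicharacters on finite abelian $p$\nobreakdash-groups for $p$ odd is rigid enough that two such bicharacters are isomorphic iff all their Gauss sums coincide, and this yields the required isomorphism $(A^{(p)},\chi^{(p)})\cong(A'^{(p)},\chi'^{(p)})$.

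The main obstacle I anticipate lies in the first step: producing a triangulation of $L(k,1)$ small enough that the sum over admissible colourings remains tractable, yet structured so that the $k$\nobreakdash-dependence emerges cleanly as $\chi(a,a)^k$ with the correct prefactor. The bookkeeping of the $\nu$\nobreakdash-signs and of the factors $1/\sqrt{\vert A\vert}$ that accompany each occurrence of $m$ in the Tambara--Yamagami $6j$\nobreakdash-symbols is delicate, and a naive triangulation is likely to drown the Gauss sum in uninformative cross terms. Once the closed formula for $\vert L(k,1)\vert_\cc$ is in hand, the extractions of algebraic data in parts~(a) and~(b) are essentially formal.
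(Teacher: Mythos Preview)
Your high-level strategy---compute $\vert L_k\vert_\cc$ explicitly and then extract the data---matches the paper. But two of your key guesses about the shape of the computation are wrong, and the method you propose for the computation itself is not the one the paper uses.

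\textbf{The computation of $\vert L_k\vert_\cc$.} The paper does \emph{not} attempt a direct state-sum over a triangulation. Instead it invokes the identity $\vert M\vert_\cc=\tau_{\zz(\cc)}(M)$ (Turaev--Virelizier) and the explicit description of the simple objects, dimensions, and twists of the Drinfeld center $\zz(\cc)$ due to Gelaki--Naidu--Nikshych. Since $L_k$ is surgery on an unknot with framing $k$, the Reshetikhin--Turaev invariant is simply $(\dim\zz(\cc))^{-1}\sum_i\theta_i^k(\dim i)^2$, and this sum over simple objects of the center is a short, completely explicit calculation. Your proposed direct state-sum on a triangulation is possible in principle but enormously harder; you correctly flag it as the main obstacle, and the paper's route sidesteps it entirely.

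\textbf{The form of the answer.} Your predicted shape is off in two ways. First, for odd $k$ the invariant depends only on $\vert A_k\vert$ and $\vert A\vert$, with \emph{no} $\chi$- or $\nu$-dependence at all; the Gauss-sum contribution appears only for even $k$. Second, the Gauss sum that arises is not $\sum_{a}\chi(a,a)^k$. The map $a\mapsto\chi(a,a)$ is quadratic with coboundary $\chi^2$, not $\chi$. What actually appears is $\zeta_{k/2}(\chi)=\vert A\vert^{-1/2}\vert A_{k/2}\vert^{-1/2}\sum_{\mu\in Q_\chi}\gamma(\mu)^{k/2}$, a sum over \emph{all} quadratic maps $\mu$ with coboundary $\chi$ of the $k/2$-th powers of their normalized Gauss sums. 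This distinction matters for the extraction step.

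\textbf{The power-of-$4$ exception.} Your explanation---that some coefficient $C(k,\vert A\vert)$ vanishes exactly when $\vert A\vert$ is a power of $4$---is not how it works. In the paper one first gets $\vert A_2\vert-\vert A'_2\vert=2\sqrt{n}$ from $k=2$ (using $\zeta_1=1$), which already forces $n$ to be a perfect square; then one uses $k=2\ell$ for an odd divisor $\ell\ge 3$ of $\sqrt{n}$ to produce a contradiction via $\vert\zeta_\ell\vert\le 1$. If $n$ is a power of $4$ there is no such odd $\ell$, and the argument stops. So the exception comes from the absence of odd prime divisors, not from a vanishing coefficient.

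The extraction in part~(b) is indeed largely formal once the formula is available, but it requires the key identity $\zeta_k(\chi)=\gamma(\mu_0^{-k})\gamma(\mu_0)^k$ and a computation of $\zeta_k$ on cyclic $p$-groups via classical Gauss sums, followed by the Minkowski--Seifert--Wall classification; your sketch of ``restricting $k$ to residues where the other factors become trivial'' is the right instinct, but the actual isolation of the $p$-part uses $k=N\ell p^j$ for suitable $N,\ell$ and varying $j$ to peel off the invariants $\sigma_s(\chi^{(p)})$ one at a time.
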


Combining the claims (a) and (b) we obtain the following corollary.

\begin{corollary}\label{cor-one}
Let $\cc=\mathcal{TY}(A,\chi,\nu)$ and $\cc'=\mathcal{TY}(A',\chi',\nu')$ be two Tambara-Yamagami categories such that
 $\vert M \vert_{\cc}= \vert M \vert_{\cc'}$ for all closed oriented 3-manifolds~$M$. If $\vert A\vert$
   is odd, then the bicharacter pairs $(A , \chi )$ and $(A' , \chi' )$ are isomorphic and $\nu=\nu'$.
\end{corollary}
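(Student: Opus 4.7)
The plan is to deduce the corollary directly from the two parts of Theorem \ref{thm-one}, which are already in hand. There is essentially no new work beyond unpacking the hypothesis $\vert A \vert$ odd and noticing that it kills the only obstructions left open by the theorem.

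First I would apply part (a) of Theorem \ref{thm-one} to get $\vert A\vert = \vert A'\vert$. Since $\vert A\vert$ is odd by hypothesis, it is in particular not a positive power of $4$, so the same statement (a) yields $\nu=\nu'$. This disposes of the sign entirely and reduces the problem to comparing the bicharacter pairs.

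Next I would use the orthogonal decomposition $(A,\chi) = \bigoplus_p (A^{(p)}, \chi^{(p)})$ recalled just before Theorem \ref{thm-one}, and likewise for $(A',\chi')$. Because $\vert A\vert = \vert A'\vert$ is odd, the $2$-primary components $A^{(2)}$ and $A'^{(2)}$ are both trivial; so only odd primes $p$ contribute to either decomposition. For each such odd prime $p$, part (b) of Theorem \ref{thm-one} furnishes an isomorphism $(A^{(p)}, \chi^{(p)}) \cong (A'^{(p)}, \chi'^{(p)})$. Taking the orthogonal sum of these isomorphisms over all odd primes $p$ produces an isomorphism $(A,\chi) \cong (A',\chi')$ of bicharacter pairs, completing the proof.

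There is no real obstacle to overcome: the content is all contained in Theorem \ref{thm-one}, and the role of the parity hypothesis is merely to eliminate the two caveats in statements (a) and (b) (the exceptional case of orders that are powers of $4$, and the absence of information at the prime $2$). The substantive work — the explicit lens-space computation and the prime-by-prime reconstruction of $\chi$ — lives in the proof of the theorem itself, not here.
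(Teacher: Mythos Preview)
Your proof is correct and matches the paper's approach exactly: the paper simply states that the corollary follows by combining claims (a) and (b) of Theorem~\ref{thm-one}, and your argument spells out precisely how that combination works.
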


We conjecture a similar claim in the case where $\vert A\vert$ is even.

The proof of Theorem~\ref{thm-one}   is based on an explicit computation of $\vert M \vert_{\cc}$ for the lens spaces $L_k=L_{k,1}$ with $k=0,1, 2,\ldots$ Recall that $L_k $ is the closed oriented 3-manifold obtained from the 3-sphere $S^3$
 by surgery along a trivial knot in $S^3$  with   framing $k$. In particular, $L_0=S^1 \times S^2$, $L_1=S^3$, and $L_2=\mathbb
{R} { P}^3$. The manifolds  $\{L_k\}_{k}$ are pairwise non-homeomorphic; they are distinguished by the fundamental group
$\pi_1(L_k)=\Z/k\Z$.

To formulate our computation of $\vert L_k \vert_{\cc}$, we recall
the notion of a Gauss sum. Let $A$ be a finite abelian group and
$\chi:A\times A\longrightarrow S^1$ be  a symmetric bilinear form  (possibly
degenerate). A {\it quadratic map} associated with~$\chi$ is a map
$\mu:A\to S^1$ such that for all $a,b\in A$,
\begin{equation*}
\mu(a+b)=\chi(a,b)\, \mu(a)\, \mu(b) .
\end{equation*}
In other words,  the coboundary of $\mu$ is equal to $\chi$. Such a
$\mu$ always exists (see, for example, \cite{Klep}) and  determines
the normalized  {\it Gauss sum}
$$\gamma(\mu)= \vert A\vert^{-1/2} \vert A^\perp_\chi \vert^{-1/2} \sum_{a\in A }  \mu(a) \in \C,$$
where $$A^\perp_\chi=\{a\in A\, \vert \, \chi(a,b)=1 \,\ {\text {for
all}}\,\ b\in A\}$$ is the  annihilator of $\chi$. (If $\chi$ is a
bicharacter, then $A^\perp_\chi=\{0\}$.) The normalization is chosen
so that either $\gamma(\mu)=0$ or $\vert \gamma(\mu)\vert=1$ (see
Lemma~\ref{lemma-twominus} below).

Denote by $Q_\chi$ the set of quadratic maps associated with $\chi$.
This set has precisely $\vert A\vert$ elements; this follows from
the fact that any two quadratic maps associated with~$\chi$ differ
by a homomorphism $A\to S^1$. Every integer $k\geq 0$ determines a
subgroup $A_k=\{a\in A\, |\, ka =0\}$ of $A$ and a number
$$\zeta_k (  \chi)=  \vert A\vert^{-1/2} \vert A_k \vert^{-1/2} \sum_{ \mu\in Q_\chi}  \gamma(\mu)^k\in \C.$$
For example, $A_0=A$ and $\zeta_0(\chi)=1$.

\begin{theorem}\label{thm-two}
Let $\cc=\mathcal{TY}(A,\chi,\nu)$  be a Tambara--Yamagami category.
For any odd integer $k\geq 1$,   we have
\begin{equation}\label{eq-odd}
\vert L_{k} \vert_{\cc}= \frac {\vert A_{k}\vert }{2\vert A\vert} .
\end{equation}
For any even integer $k\geq 0$, we have
\begin{equation}\label{eq-even}
\vert L_{k}\vert_{\cc}=\frac{\vert A_{k}\vert +\nu^{k/2}\vert A\vert^{1/2}\vert A_{k/2}\vert^{1/2}\zeta_{k/2}
(\chi)}{2\vert A\vert}.
\end{equation}
\end{theorem}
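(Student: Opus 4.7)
My plan exploits the $\Z/2$-grading on $\cc=\mathcal{TY}(A,\chi,\nu)$: the even part $\cc_0$ consists of the invertible simples $\{a:a\in A\}$ (each of dimension~$1$), and the odd part $\cc_1$ consists of the single simple~$m$ of dimension $|A|^{1/2}$, so that $\dim\cc_0=\dim\cc_1=|A|$ and $\dim\cc=2|A|$. In any triangulation $T$ of a closed oriented 3-manifold $M$, the fusion rules $a\otimes b=a+b$, $a\otimes m=m$, $m\otimes m=\bigoplus_{a\in A}a$ are parity-preserving, so assigning to each edge of~$T$ its $\Z/2$-label defines a $1$-cocycle. The state sum therefore decomposes as
\[
|M|_\cc=\sum_{\alpha\in H^1(M;\Z/2)}|M|_{\cc,\alpha};
\]
for $M=L_k$ this has a single term when $k$ is odd and two terms when $k$ is even, since $H^1(L_k;\Z/2)=\Hom(\Z/k\Z,\Z/2)$ has order $\gcd(k,2)$.

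The $\alpha=0$ sector uses only $A$-labels, for which all $6j$-symbols are trivial. A standard graded-extension argument (valid for any closed oriented $3$-manifold because $\chi(M)=0$) identifies this sector with $(1/2)\,|M|_{\Rep(A)}$, the ratio $1/2$ being $\dim\cc_0/\dim\cc$. Since $|L_k|_{\Rep(A)}=|\Hom(\pi_1(L_k),A)|/|A|=|A_k|/|A|$ by the standard Dijkgraaf--Witten formula, the $\alpha=0$ contribution is $|A_k|/(2|A|)$, which is the universal term in both~\eqref{eq-odd} and~\eqref{eq-even}.

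For even $k$ and $\alpha\neq 0$, the $m$-labeled edges of any contributing coloring form a $1$-cycle representing the generator of $H^1(L_k;\Z/2)$. Working in a triangulation of $L_k$ adapted to its cyclic $\Z/k$-symmetry (such as a bipyramidal triangulation coming from a genus-$1$ Heegaard splitting of $L_k$), one can arrange this cycle to traverse exactly $k/2$ pure-$m$ tetrahedra. Expanding each such tetrahedron via $m\otimes m=\bigoplus_a a$ produces a sum over~$A$; the explicit Tambara--Yamagami $6j$-symbols then contribute one factor of~$\nu$ per pure-$m$ tetrahedron (giving $\nu^{k/2}$) together with a product of $\chi^{\pm 1}$-values which, by the symmetry of $\chi$, collapses into a quadratic expression $\mu(a)^{k/2}$ for a choice of quadratic refinement $\mu\in Q_\chi$. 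Reorganizing the sum via the bijection between $Q_\chi$ and characters of $A$ (translating by a fixed $\mu_0\in Q_\chi$) and comparing with $\zeta_{k/2}(\chi)=|A|^{-1/2}|A_{k/2}|^{-1/2}\sum_\mu\gamma(\mu)^{k/2}$ yields the second summand of~\eqref{eq-even}.

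The main obstacle is the careful bookkeeping of the $|A|^{1/2}$-factors contributed by the dimension of~$m$, by the state-sum prefactor $(\dim\cc)^{-V}$, by the $\nu|A|^{-1/2}\chi^{-1}$-type $6j$-symbols on fully-$m$ tetrahedra, and by the Gauss-sum normalization $\gamma(\mu)=|A|^{-1/2}\sum_a\mu(a)$. A further subtle point is confirming that the $\chi$-factors accumulated along the $m$-cycle genuinely assemble into a quadratic function in a single variable (so that the resulting sum over $A$ is a Gauss sum and not an apparently more complicated bilinear object); this exploits the symmetry $\chi(a,b)=\chi(b,a)$ together with the cyclic ordering of tetrahedra along the $m$-cycle, and cleanly explains the parity distinction $k$ odd versus $k$ even as the vanishing versus non-vanishing of $H^1(L_k;\Z/2)$.
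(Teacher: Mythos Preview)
Your approach is genuinely different from the paper's and, in the $\alpha\neq 0$ sector, it is not a proof but a programme with substantial gaps.

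The paper does not touch the state sum directly at all. It invokes the identity $|M|_{\cc}=\tau_{\zz(\cc)}(M)$ of Turaev--Virelizier, feeds in the explicit list of simple objects, dimensions and twists of $\zz(\cc)$ from Gelaki--Naidu--Nikshych, and evaluates the Reshetikhin--Turaev surgery formula $\tau_{\zz(\cc)}(L_k)=\mathcal D^{-2}\sum_{i\in I}\theta_i^{k}(\dim i)^2$. Lemma~\ref{lemma-one} then computes this sum: the invertible and $Y$-type simples give $2n|A_k|$, and for even $k$ the $Z$-type simples contribute the Gauss-sum term. The whole argument is a few lines of algebra once the two cited theorems are in hand.

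Your $\Z/2$-graded decomposition of the state sum and the identification of the $\alpha=0$ sector with $\tfrac12|L_k|_{\Rep(A)}=|A_k|/(2|A|)$ are correct and pleasant; this is the standard behaviour of Turaev--Viro invariants under a faithful grading. But your treatment of the nontrivial sector is not a proof. You assert that ``one can arrange'' a triangulation in which the $m$-cycle passes through exactly $k/2$ fully-$m$ tetrahedra, that the accumulated $\chi^{\pm1}$-factors ``collapse into a quadratic expression $\mu(a)^{k/2}$'', and that ``reorganizing the sum'' produces $\zeta_{k/2}(\chi)$. None of these steps is carried out: no triangulation is exhibited, no admissible labelings are enumerated, no $6j$-products are written down, and the delicate power-of-$|A|^{1/2}$ bookkeeping that you yourself flag as ``the main obstacle'' is never performed. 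In particular, you do not explain how a sum over \emph{all} $\mu\in Q_\chi$ (which is what $\zeta_{k/2}$ is) emerges from a single cohomology class $\alpha$; in the paper this sum appears transparently because the center has $2n$ simple objects $Z_{(\mu,\Delta)}$ indexed by pairs $(\mu,\Delta)$ with $\Delta^2=\nu\gamma(\mu)$, so that $\sum_{(\mu,\Delta)}\Delta^{k}=2\nu^{k/2}\sum_\mu\gamma(\mu)^{k/2}$ for even $k$. Your route may well be completable, and would then give a self-contained derivation avoiding both \cite{TVi} and \cite{GNN}, but as written the $\alpha\neq 0$ computation is a heuristic, not an argument.
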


For $k=0$, Formula~\eqref{eq-even} gives $\vert S^1 \times S^2 \vert_{\cc} =1$ which is known to be true for all spherical fusion categories $\cc$.

Our proof of Theorem~\ref{thm-two} is based on two results. The first is the equality $\vert M \vert_{\cc}= \tau_{\zz(\cc)} (M)$ recently established in \cite{TVi}. Here $\cc$ is an arbitrary spherical fusion category
of non-zero dimension, $\zz(\cc)$ is the Drinfeld-Joyal-Street center of $\cc$, and $\tau_{\zz(\cc)}(M)$ is the Reshetikhin-Turaev invariant of $M$. The second result is the computation of the center of $\cc=\mathcal{TY}(A, \chi,\nu)$ in \cite{GNN}.

The paper is organized as follows. In Section~\ref{sec-1} we recall the Tambara-Yamagami category and its center  and  prove Theorem~\ref{thm-two}. In Sections~\ref{sec-3} and~\ref{sec-3(a)} we prove respectively claims (a) and (b) of Theorem~\ref{thm-one}.

This paper was started during the visit of VT to the University of
Caen in June 2010. VT would like to thank the University of Caen for
hospitality. The work of V.~Turaev  was partially supported by the
NSF grant DMS-0904262.

\section{The   Tambara-Yamagami categories and their centers}\label{sec-1}

In this section,   $(A,\chi)$ is a bicharacter pair,  $\nu=\pm 1$,
and  $n=\vert A\vert$.

\subsection{The   category $\mathcal{TY}(A,\chi,\nu)$}\label{subsec-1}
The  simple objects of the Tambara-Yamagami category $\mathcal
C=\mathcal{TY}(A,\chi,\nu)$ are all elements $a$ of $ A$ and an
additional object $m$. The unit object of $\mathcal C$ is the zero
element  $0\in A$. All other objects of $\mathcal C$ are finite
direct sums of the simple objects. The tensor product in $\mathcal
C$ is determined by the following fusion rules:
$$
a\otimes b=a+b \, \, {\text {and}} \, \,  a\otimes m=m\otimes a=m \, \ {\text{for all}} \,\ a,b\in A,\ {\text {and}}\ \,
m\otimes m=\bigoplus_{a\in A} a.
$$
The category $\mathcal C$ is associative but generally speaking not strictly associative. For any simple objects $U,V,W$ of    $\mathcal C$, the associativity isomorphism $\phi_{ U,V,W}: (U\otimes V) \otimes W\to U \otimes (V \otimes W)$  is  given by the following formulas (where $a,b,c$ run over $A$):\\
$$\begin{array}{lcl}
\phi_{a,b,c} = id_{a+b+c},\hfill \phi_{a,b,m} = id_m,\hfill
\phi_{m,a,b} = id_m,\\ \phi_{a,m,b} = \chi(a,b)id_m,\hfill
\phi_{a,m,m} = \displaystyle\bigoplus_{b\in A}id_b,\hfill
\phi_{m,m,a} = \displaystyle\bigoplus_{b\in A}id_b,\\
\phi_{m,a,m} = \displaystyle\bigoplus_{b\in A}\chi(a,b)id_b,\,
\hfill \phi_{m,m,m} = ( {\nu} n^{-1/2}
\chi(a,b)^{-1}id_m)_{a,b}.
\end{array}$$
The unit isomorphisms are trivial. The duality in $\cc$ is defined
by $a^*=-a$ for  all $a\in A$ and  $m^*=m$. The left duality
morphisms in $\mathcal C$  are the identity maps $ 0\to a\otimes
a^*,\ a^*\otimes a\to 0$ for $a\in A$, the inclusion $
0\hookrightarrow  m\otimes m $  and $\nu  n^{1/2}$ times the obvious
projection $  m\otimes m \to 0$. The right duality morphisms in
$\mathcal C$ are the identity maps $ 0\to a^*\otimes a,\ a\otimes
a^*\to 0$ for $a\in A$, $\nu$ times the inclusion $ 0\hookrightarrow
m\otimes m $  and $n^{1/2}$ times the obvious projection $  m\otimes
m\to 0$.

We define a fusion category as a $\C$-linear  monoidal category with
compatible left and right dualities such that all objects are direct
sums of simple objects, the number of isomorphism classes of simple
objects is finite, and the unit object is simple. (An object $V$ is
simple if ${\rm End } (V)=\C \, id_V$.) The condition of sphericity
says that the left and right dimensions of all objects are equal.    A basic reference on the theory of fusion categories
is \cite{ENO}. It is easy to see that $\cc=\mathcal{TY}(A,\chi,\nu)$
is a spherical fusion category of dimension $2n$.

\subsection{The   center}\label{subsec-2}  The center $ {\zz(\cc)} $ of $\mathcal C=\mathcal{TY}(A,\chi,\nu)$  was computed in \cite{GNN},
Prop.\@ 4.1. The category $ {\zz(\cc)} $ has  three types of   simple objects whose description together with
the corresponding quantum dimensions and twists is as follows:

(1) $2n$ invertible objects $X_{(a,\varepsilon)}$, where $ a$ runs
over $ A$ and $ \varepsilon$ runs over complex square roots of
$\chi(a,a)^{-1}$. Here $\dim(X_{(a,\varepsilon)})=1$ and $
\theta_{(a,\varepsilon)}=\chi(a,a)^{-1}$;

(2) $\frac{n(n-1)}{2}$ objects $Y_{(a,b)}$ parameterized by
unordered pairs $(a,b)$, where $a,b\in A,\ a\neq b$. Here
$\dim(Y_{(a,b)})=2$ and $ \theta_{(a,b)}=\chi(a,b)^{-1}$;

(3) $2n$ objects $Z_{(\mu,{\Delta})}$, where $\mu $ runs over
$Q_\chi$ and ${\Delta}$ runs over the square roots of $ {\nu}
\gamma(\mu)$. Here $\dim(Z_{(\mu,{\Delta})})=  n^{1/2}$ and
$\theta_{(\mu,{\Delta})}={\Delta}$.

Denote by $I$ the set of the (isomorphism classes of) simple objects of $\zz(\cc)$. The dimension
of $\zz(\cc)$ is computed by
$$\dim \zz(\cc)=\sum_{i\in I} (\dim(i))^2=2n \times 1 + \frac{n(n-1)}{2} \times 4+ 2n \times n=4n^2.$$
We will need the following more general computation.

\begin{lemma}\label{lemma-one}
For an  integer $k\geq 0$, set
$
\tau_{k}= \sum_{i\in I}\theta_i^{k}(\dim(i))^2
$,
where $\theta_i$ and $\dim(i)$ are the twist and the dimension of $i\in I$.
If $k$ is odd, then  $ \tau_{k} = 2n \vert A_{k}\vert$.   If $k$ is even, then $
\tau_{k}  = 2n ({\vert A_{k}\vert} + {\nu^{k/2}}\vert A\vert^{1/2}\vert A_{k/2}\vert^{1/2}
\zeta_{k/2} (\chi)) $.
\end{lemma}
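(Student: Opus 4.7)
The plan is to split $\tau_k$ into three partial sums according to the three types of simple objects in $\zz(\cc)$, evaluate each, and combine. Using the data recalled in Subsection 1.2, I would write
\begin{equation*}
\tau_k \;=\; \underbrace{\sum_{a\in A}\sum_{\varepsilon^2=\chi(a,a)^{-1}}\!\!\!\chi(a,a)^{-k}}_{\text{type }X}
\;+\; \underbrace{\sum_{\{a,b\},\,a\neq b}4\,\chi(a,b)^{-k}}_{\text{type }Y}
\;+\; \underbrace{\sum_{\mu\in Q_\chi}\sum_{\Delta^2=\nu\gamma(\mu)}\!\!\!n\,\Delta^{k}}_{\text{type }Z}.
\end{equation*}
The two $\varepsilon$'s each contribute $1$ to the type-$X$ sum, giving $2\sum_{a\in A}\chi(a,a)^{-k}$. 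Turning the unordered pairs into ordered ones (and using symmetry of $\chi$) converts the type-$Y$ sum into $2\sum_{a\neq b}\chi(a,b)^{-k}$. Adding the two diagonal and off-diagonal contributions yields the clean expression $2\sum_{a,b\in A}\chi(a,b)^{-k}$ for types $X$ and $Y$ together.

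Next I would exploit that $\chi$ is a bicharacter. Bilinearity gives $\chi(a,b)^{-k}=\chi(-ka,b)$, so the inner sum $\sum_{b\in A}\chi(-ka,b)$ is the value at $-ka$ of a sum over the non-degenerate character pairing, hence equals $|A|$ when $ka=0$ and $0$ otherwise. This shows that the type-$X$ plus type-$Y$ contribution is $2\,|A|\cdot|A_k|=2n\,|A_k|$ for every $k\ge 0$.

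The type-$Z$ sum is where the parity of $k$ enters. For fixed $\mu$ the two square roots $\pm\Delta$ of $\nu\gamma(\mu)$ satisfy $\Delta^k+(-\Delta)^k=0$ when $k$ is odd, so the whole type-$Z$ contribution vanishes; this gives $\tau_k=2n|A_k|$ for odd $k$. For even $k$ the two square roots both yield $\Delta^k=(\nu\gamma(\mu))^{k/2}$, so type $Z$ contributes
\begin{equation*}
2n\,\nu^{k/2}\!\!\sum_{\mu\in Q_\chi}\!\gamma(\mu)^{k/2}
\;=\;2n\,\nu^{k/2}\,|A|^{1/2}\,|A_{k/2}|^{1/2}\,\zeta_{k/2}(\chi),
\end{equation*}
by the very definition of $\zeta_{k/2}(\chi)$. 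Adding this to $2n|A_k|$ gives the even-$k$ formula.

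There is no genuine obstacle: both the vanishing/doubling dichotomy for the $\Delta$'s and the orthogonality identity $\sum_b \chi(ka,b)=|A|\,[ka=0]$ are immediate. The only point requiring care is the bookkeeping for the type-$Y$ objects (unordered versus ordered pairs, with the factor $4=2\cdot 2$ from $\dim(Y_{(a,b)})^2$), which must be arranged so that the diagonal $a=b$ combines correctly with the type-$X$ sum to produce the full double sum over $A\times A$.
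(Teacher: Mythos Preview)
Your proof is correct and follows essentially the same approach as the paper: both combine the type-$X$ and type-$Y$ contributions into the full double sum $2\sum_{a,b\in A}\chi(a,b)^{-k}=2n|A_k|$ via the orthogonality coming from non-degeneracy of $\chi$, and both handle the type-$Z$ contribution by the parity dichotomy $\Delta^k+(-\Delta)^k$, recognizing the definition of $\zeta_{k/2}(\chi)$ in the even case. The only difference is packaging---the paper writes $\tau_k=2u_k+nv_k$ with $u_k$ collecting types $X$ and $Y$ and $v_k$ collecting type $Z$---but the argument is the same.
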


\begin{proof} A direct computation shows that $
\tau_{k}=2u_k+nv_k$, where    $$
u_k=\sum_{a\in A}\chi(a,a)^{-k}+\sum_{(a,b)\in A^2, a\neq b}\chi(a,b)^{-k} $$ and   $v_k=\sum_{({\mu}, \Delta) }{\Delta}^k $.
Since $\chi$ is non-degenerate,
$$
u_k=\sum_{a,b\in A}  \chi(a,b)^{-k}=\sum_{a,b\in A}\chi(a,b^{-k}) =n \, |A_k|.
$$
If $k$ is odd, then the contributions of the pairs $({\mu}, \Delta)$ and $({\mu}, -\Delta)$ to $v_k$ cancel each other so that $v_k=0$ and  $ \tau_{k} = 2n \vert A_{k}\vert$.
For  even  $k$,
$$v_k=\sum_\mu 2  ({\nu} \gamma (\mu) )^{k/2}=2 {\nu^{k/2}}\vert A\vert^{1/2}\vert A_{k/2}\vert^{1/2}\zeta_{{k/2}} (\chi).$$
\end{proof}

\subsection{Proof of Theorem \ref{thm-two}}\label{sec-2}
Since $\cc=\mathcal{TY}(A,\chi,\nu)$ is a spherical fusion category
of non-zero dimension, it determines for any closed oriented
3-manifold $M$  a   state sum invariant  $\vert M\vert_{\cc}\in \C$,
see \cite{TV}, \cite{BW}. By a theorem of M\"uger \cite{Mug}, the
category $\zz(\cc)$ is modular in the sense of~\cite{Tur}. A modular
category endowed with a square root $\mathcal D$ of its dimension
gives rise to the Reshetikhin-Turaev invariant of any $M$ as above.
The RT-invariant of $M$ determined by $\zz(\cc)$ and the square root
$\mathcal D=2n=\tau_1$ of $\dim \zz(\cc)$ will be denoted by
$\tau_{\zz(\cc)}(M)$. A theorem of Virelizier and Turaev \cite{TVi}
implies that  $\vert M\vert_{\cc}=\tau_{\zz(\cc)}(M)$ for all $M$.
By \cite{Tur}, Chapter II, 2.2, for all $k\geq 0$,
\begin{equation*}
\tau_{\zz(\cc)}(L_{k})={\mathcal D}^{-2}\sum_{i\in I}\theta_i^{k}(\dim(i))^2=4n^{-2} \tau_k.
\end{equation*}
Substituting the expression for $\tau_k$ provided by Lemma~\ref{lemma-one}, we obtain the claim of the theorem.

\section{Proof of Theorem~\ref{thm-one}(a)}\label{sec-3}

We start with a well known lemma. In this lemma we call a quadratic
map $\mu :A\to S^1$ {\it homogeneous} if $\mu(na)=(\mu(a))^{n^2}$
for all $n\in \Z$ and $a\in A$.

\begin{lemma}\label{lemma-twominus}   Let $A$ be a finite abelian group and $\mu :A\to S^1 $ be a quadratic map
  associated with a symmetric bilinear form $\chi:A\times A \to
  S^1$. Set $A^\perp=A^\perp_\chi\subset A$.

  - If $\mu(A^\perp)\neq 1$, then $\gamma(\mu)= 0$.

  - If $\mu(A^\perp)= 1$, then $\vert
  \gamma(\mu)\vert=1$.

  - If $\mu(A^\perp)= 1$ and $\mu$ is homogeneous, then
  $\gamma(\mu)$ is an 8-th complex root of unity.
\end{lemma}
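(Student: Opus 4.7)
The plan is to dispatch the three assertions of the lemma separately, since each calls for a different technique.

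For the first assertion, I would exploit translation invariance of the sum. Given any $c\in A^\perp$, the defining quadratic identity combined with $\chi(a,c)=1$ yields $\mu(a+c)=\mu(a)\mu(c)$, and re-indexing $a\mapsto a+c$ shows $\sum_{a\in A}\mu(a)=\mu(c)\sum_{a\in A}\mu(a)$. If $\mu(c)\neq 1$ for some $c\in A^\perp$, this forces the sum, and hence $\gamma(\mu)$, to vanish.

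For the second assertion, I would compute $|\gamma(\mu)|^2$ directly. The substitution $a=b+c$ in the double sum $\sum_{a,b\in A}\mu(a)\overline{\mu(b)}$ converts the summand into $\chi(b,c)\mu(c)$ (using $|\mu(b)|=1$). Summing over $b$ first and invoking character orthogonality — which relies on the fact that $\chi(\cdot,c)$ is trivial precisely when $c\in A^\perp$ — collapses the expression to $|A|\sum_{c\in A^\perp}\mu(c)$. Under the hypothesis $\mu(A^\perp)=1$ this equals $|A|\,|A^\perp|$, which matches the normalization and gives $|\gamma(\mu)|=1$.

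For the third assertion, my plan is to reduce to the non-degenerate case and invoke a classical formula. The identity $\mu(a+c)=\mu(a)$ for $c\in A^\perp$ (which uses only $\mu(A^\perp)=1$, not homogeneity) shows that $\mu$ descends to a quadratic map $\overline\mu:A/A^\perp\to S^1$ associated with the non-degenerate symmetric form $\overline\chi$ induced on $A/A^\perp$; homogeneity is preserved under this descent, and the normalization factors in the statement are arranged precisely so that $\gamma(\mu)=\gamma(\overline\mu)$. The conclusion then follows from the classical Milgram--Brown formula: the normalized Gauss sum of a non-degenerate homogeneous quadratic form on a finite abelian group is an $8$-th root of unity. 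This is proved by diagonalizing $\overline\chi$ via the $p$-primary decomposition already recalled in the introduction and reducing to standard evaluations on the cyclic groups $\Z/p^r\Z$.

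The only genuine obstacle is the third assertion, and there the substantive content is Milgram's formula itself. Since the latter is completely standard, I would invoke it by citation (in the style of~\cite{Klep}) rather than reprove the diagonalization and the cyclic group Gauss sums, keeping the lemma as the brief, self-contained preparation for the main arguments of Sections~\ref{sec-3} and~\ref{sec-3(a)}.
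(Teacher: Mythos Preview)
Your argument is correct and essentially follows the paper's proof. The only organizational difference is that the paper handles the first two assertions in a single stroke: it computes $|A|\,|A^\perp|\,|\gamma(\mu)|^2=|A|\sum_{a\in A^\perp}\mu(a)$ via the same substitution you use, then observes that $\mu|_{A^\perp}$ is a homomorphism to $S^1$, so the right-hand side is $0$ when $\mu(A^\perp)\neq 1$ and $|A|\,|A^\perp|$ otherwise---yielding both claims at once. Your separate translation argument for the first assertion is correct but redundant, since it is already contained in the $|\gamma(\mu)|^2$ computation you carry out anyway. For the third assertion your descent-plus-citation matches the paper exactly (the paper cites Scharlau~\cite{Sc} rather than Milgram directly).
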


\begin{proof}
We have
$$ \vert  A\vert \, \vert A^\perp \vert\,  \vert \gamma(\mu)\vert^2= \vert \sum_{a\in A} \mu(a)\vert^2=
\sum_{a, b\in A}\mu(a)\overline {\mu(b)} =\sum_{a, b\in A}\mu(a){\mu(b)}^{-1}$$
$$=\sum_{a, b\in A} \mu(a+b) {\mu(b)}^{-1} =\sum_{a, b\in A} \chi (a,b)  \mu(a).$$
When $b$ runs over $A$, the complex number $\chi(a,b)$ runs over a finite subgroup of $S^1$.
We have  $\sum_{b\in A} \chi(a,b)=0$ unless this subgroup is trivial. The latter holds if and only if
$a\in A^\perp$ and in this case $\sum_{b\in A}  \chi(a,b)=\vert A\vert$. Therefore,
$$\vert  A\vert \, \vert A^\perp \vert\,  \vert \gamma(\mu)\vert^2=
\vert  A\vert \, \sum_{a\in A^\perp} \mu(a).$$ The restriction of
$\mu$ to $A^\perp$ is a group homomorphism $A^\perp\to S^1 $. If
$\mu(A^\perp)\neq 1$, then $\sum_{a\in A^\perp} \mu(a)=0$ and
therefore $\gamma(\mu)=0$. Suppose now that   $\mu(A^\perp)= 1$.
Then $\sum_{a\in A^\perp} \mu (a)=\vert A^\perp\vert$ and therefore
$\vert
  \gamma(\mu)\vert=1$. The equality  $\mu(A^\perp)= 1$ also ensures that $\mu$ is the composition of the projection $A\to A'= A/A^\perp$
   with a quadratic map $\mu':A'\to S^1 $
  associated with the non-degenerate symmetric bilinear form $ A'\times A' \to S^1$ induced by $\chi$.
   It follows from the definitions that $\gamma(\mu)=\gamma(\mu')$. If $\mu$ is homogeneous, then so is $\mu'$. It is known (see, for instance, \cite{Sc}, Chapter 5, Section 2)
   that  for any  homogeneous quadratic map on a finite abelian group associated with a non-degenerate symmetric bilinear form, the corresponding  invariant $\gamma$ is an $8$-th root of unity.
    This implies the last claim  of the lemma.
\end{proof}

\begin{lemma}\label{lemma-two} Let $(A,\chi)$ be a bicharacter pair.
  For any integer $\ellk \geq 1 $, either
$\zeta_\ellk (\chi)=0$ or $  \zeta_\ellk (\chi) $ is an $8$-th root
of unity. If $\ellk=1$ or $\ellk$ is divisible by $8\vert A\vert$,
then $\zeta_\ellk(\chi)=1$.
\end{lemma}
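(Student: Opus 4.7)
The plan is to reduce the sum defining $\zeta_\ellk(\chi)$ to a product of two classical Gauss sums, then invoke Lemma~\ref{lemma-twominus} twice.

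First, I would fix a \emph{homogeneous} quadratic map $\mu_0\in Q_\chi$. Such a $\mu_0$ exists because any finite symmetric bilinear form $\chi$ admits a homogeneous quadratic refinement, i.e.\ one satisfying $\mu_0(a)^2=\chi(a,a)$ for all $a$. Since the adjoint $A\to\Hom(A,S^1)$ of a bicharacter is bijective, every $\mu\in Q_\chi$ can be written uniquely as $\mu_a(b):=\mu_0(b)\,\chi(a,b)$ with $a\in A$; conversely, each such $\mu_a$ is indeed in $Q_\chi$.

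The next step is a one-line computation: using the cocycle identity $\mu_0(a)\mu_0(b)\chi(a,b)=\mu_0(a+b)$, one gets
\begin{equation*}
\gamma(\mu_a)=\vert A\vert^{-1/2}\sum_{b\in A}\mu_0(b)\chi(a,b)
            =\mu_0(a)^{-1}\gamma(\mu_0).
\end{equation*}
Hence
\begin{equation*}
\sum_{\mu\in Q_\chi}\gamma(\mu)^\ellk
 =\gamma(\mu_0)^\ellk\sum_{a\in A}\mu_0(a)^{-\ellk}.
\end{equation*}
The map $\mu_0^{-\ellk}\colon A\to S^1$ is a quadratic map associated with the (possibly degenerate) symmetric form $\chi^{-\ellk}$, whose annihilator in $A$ equals $A_\ellk$ (since $\chi$ is non-degenerate, $\chi(a,b)^\ellk=\chi(\ellk a,b)$ is trivial in $b$ iff $\ellk a=0$). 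Comparing with the definition of the Gauss sum for $\chi^{-\ellk}$, one obtains the clean identity
\begin{equation*}
\zeta_\ellk(\chi)=\gamma(\mu_0)^\ellk\,\gamma(\mu_0^{-\ellk}).
\end{equation*}

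Now the first part of the lemma follows from Lemma~\ref{lemma-twominus} applied to each factor: because $\mu_0$ is homogeneous, so is $\mu_0^{-\ellk}$; because $\chi$ is a bicharacter, $\gamma(\mu_0)$ is an $8$-th root of unity; and $\gamma(\mu_0^{-\ellk})$ is either $0$ (if $\mu_0^{-\ellk}$ is non-trivial on $A_\ellk$) or, being the Gauss sum of a homogeneous quadratic map descending to a non-degenerate form on $A/A_\ellk$, an $8$-th root of unity. In either case $\zeta_\ellk(\chi)$ is $0$ or an $8$-th root of unity.

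For the special values, the case $\ellk=1$ is immediate: since $\vert\mu_0(a)\vert=1$, we have $\mu_0^{-1}=\overline{\mu_0}$, hence $\gamma(\mu_0^{-1})=\overline{\gamma(\mu_0)}$, and $\zeta_1(\chi)=\vert\gamma(\mu_0)\vert^2=1$. For $\ellk$ divisible by $8\vert A\vert$, the factor $\gamma(\mu_0)^\ellk$ equals $1$ because $8\mid \ellk$ and $\gamma(\mu_0)$ is an $8$-th root of unity. For the other factor, observe that homogeneity and $\chi(a,a)^{\vert A\vert}=1$ force $\mu_0(a)$ to have order dividing $2\vert A\vert$; thus $2\vert A\vert\mid \ellk$ implies $\mu_0^{-\ellk}\equiv 1$, while $\vert A\vert\mid \ellk$ implies $A_\ellk=A$, giving $\gamma(\mu_0^{-\ellk})=\vert A\vert^{-1/2}\vert A\vert^{-1/2}\vert A\vert=1$.

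The main subtlety is selecting the homogeneous base point $\mu_0$; without homogeneity one only obtains $\vert\zeta_\ellk\vert\in\{0,1\}$, and the sharper $8$-th root conclusion (as well as the clean vanishing of $\mu_0^{-\ellk}$ modulo $\ellk=8\vert A\vert$) would be unavailable.
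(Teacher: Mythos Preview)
Your argument is correct and follows essentially the same route as the paper: you parametrize $Q_\chi$ by $A$ via a homogeneous base point $\mu_0$, derive the key factorization $\zeta_\ellk(\chi)=\gamma(\mu_0)^{\ellk}\,\gamma(\mu_0^{-\ellk})$, and then invoke Lemma~\ref{lemma-twominus} on each factor. The only cosmetic difference is in the final step: to show $\mu_0^{2\vert A\vert}\equiv 1$, you use homogeneity in the form $\mu_0(a)^2=\chi(a,a)$ together with $\chi(a,a)^{\vert A\vert}=1$, whereas the paper obtains the same conclusion from the cocycle identity alone via $\mu_0(2na)=\mu_0(a)^{2n}\chi(a,a)^{n(2n-1)}$ and $na=0$.
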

\begin{proof} Pick a
  quadratic  map $\mu_0:A\to S^1$ associated with $\chi$.
Observe that for every integer $k $, the function $\mu_0^{k}:A\to
S^1$ carrying any $ c\in A$ to $ (\mu_0(c))^{\ellk}$ is a  quadratic
map associated with the symmetric  bilinear form $\chi^{k}:A\times
A\to S^1$ defined by $\chi^{k}(a,b)= (\chi (a,b) )^{k}$. We claim
that for all $k\in \Z$,
\begin{equation}\label{prin}\zeta_\ellk(\chi)=\gamma (\mu_0^{-k})\,
(\gamma (\mu_0))^k.
\end{equation} Indeed, since $\chi$ is non-degenerate, any quadratic map
$\mu:A\to S^1$ associated with $\chi$   can be expanded in the form
$\mu(a)= \chi (a, c)\, \mu_0(a)$ for a unique $c=c(\mu)\in A$. Since
$\chi(a,c) \, \mu_0(a)=\mu_0(a+c)\, \mu_0(c)^{-1}$ for all $a,c\in
A$, we have
$$
 \zeta_\ellk(\chi)= |A|^{-1/2} |A_\ellk|^{-1/2}\sum_{\mu\in Q_\chi}(|A|^{-1/2}\sum_{a\in A}
\mu(a))^\ellk
$$
$$
= |A|^{-1/2} |A_\ellk|^{-1/2} \sum_{c\in A} (|A|^{-1/2}\sum_{a\in
A}\chi(a,c)\, \mu_0(a))^\ellk$$ $$= \{ |A|^{-1/2} |A_\ellk|^{-1/2}
\sum_{c\in A}\mu_0(c)^{-\ellk}\}\{|A|^{-1/2}\sum_{b\in
A}\mu_0(b)\}^{\ellk}$$
$$=\gamma (\mu_0^{-k})\, (\gamma (\mu_0))^k.
$$
In the last equality we use  the obvious fact that
$A^\perp_{\chi^{-k}}=A_k$.

We can always choose  $\mu_0:A\to S^1$ to be homogeneous. Then
$\mu_0^{-k}$ also is homogeneous. Since $\chi$ is non-degenerate,
the previous lemma implies that $\gamma (\mu_0)$ is an $8$-th root
of unity and $\gamma (\mu_0^{-k})$ is either zero or an $8$-th root
of unity. This implies the first claim of the lemma.

For $k=1$, Formula~\eqref{prin} gives
$$\zeta_1(\chi)= \gamma (\mu_0^{-1})\, \gamma (\mu_0) ={\gamma
(\overline{\mu_0})} \, \gamma (\mu_0)=\overline{\gamma (\mu_0)} \,
\gamma (\mu_0)= 1,$$ where the
 overbar  is the complex conjugation.

Observe that $\mu_0^{2n}=1$ for $n=\vert A\vert$. Indeed,  for any
$a\in A$,
 $$1=\mu_0(0)=\mu_0(2na)=  (\mu_0(a))^{2n} \chi(a,a)^{n(n-1)}$$
 $$=
 (\mu_0(a))^{2n}
 \chi(na,(n-1) a)=    (\mu_0(a))^{2n}.$$
 Therefore for all   $k\in 2n\Z$, we have $\gamma
 (\mu_0^{-k})=1$. If $k \in 8 \Z$, then   $(\gamma
 (\mu_0))^k=1$. Hence, if $k\in 8n\Z$, then $\zeta_\ellk(\chi)=\gamma (\mu_0^{-k})\,
(\gamma (\mu_0))^k=1$.
\end{proof}

\subsection{Proof of Theorem~\ref{thm-one}(a)}
For $k=1$, Formula~\eqref{eq-odd} gives $\vert L_{1}\vert_{\cc}=(2\vert A\vert )^{-1}$. Thus,
$$\vert A\vert =\vert L_{1}\vert_{\cc}^{-1}/2=\vert L_{1}\vert_{\cc'}^{-1}/2=\vert A'\vert.$$
This and Formula~\eqref{eq-odd} implies that $\vert A_k\vert =\vert
A'_k\vert$ for all odd $k\geq 1$.

Set $n=\vert A\vert=\vert A'\vert $. Suppose that $\nu\neq\nu'$. Assume for concreteness that $\nu=-1$
and $\nu'=+1$. Formula~\eqref{eq-even} with $k=2$ and Lemma~\ref{lemma-two} show that
$$\vert A_{2}\vert -n^{1/2}=2n\vert L_{2}\vert_{\cc}=2n\vert L_{2}\vert_{\cc'}=\vert A'_{2}\vert + n^{1/2}.$$
Thus, $\vert A_{2}\vert -\vert A'_{2}\vert= 2n^{1/2}$. Therefore,
$n=m^2$ for an integer $m\geq 1$. Since $n$ is not a positive power
of $4$,   either $m=1$ or $m$ is  not a power of $2$. If $m=1$, then
$A=A'=\{0\}$ and so $A_{2}=  A'_{2}=\{0\}$ which contradicts the
equality $\vert A_{2}\vert -\vert A'_{2}\vert= 2m$.

Suppose that $m=n^{1/2}$ is  not a power of $2$. Pick an odd divisor
$\ell\geq 3$   of $m $. Applying Formula~\eqref{eq-even} to
$k=2\ell$, we obtain that
$$\vert A_{k}\vert - m\vert A_{\ell} \vert^{1/2}  \zeta_{\ell} (  \chi)=\vert A'_{k}\vert + m \vert A'_{\ell} \vert^{1/2}  \zeta_{\ell} (  \chi').$$
Note that $\vert A_{k}\vert=\vert A_{2}\vert\, \vert A_{\ell}\vert$
and similarly for $A'$. Since $\ell$ is odd, we have $\vert
A_\ell\vert =\vert A'_\ell \vert$. Therefore
$$\vert A_{2}\vert -\vert A'_{2}\vert= m \vert A_{\ell} \vert^{-1/2}  (\zeta_{\ell} (  \chi')+\zeta_{\ell} (  \chi)).$$
The right-hand side of this equality must be a real number that cannot exceed $2m \vert A_{\ell} \vert^{-1/2}$ by Lemma~\ref{lemma-two}. Thus, $\vert A_{2}\vert -\vert A'_{2}\vert \leq 2m \vert A_{\ell} \vert^{-1/2}$.
Since $\ell$ divides $n$, we have $A_\ell\neq 1$ so that $ \vert A_{\ell} \vert \geq  2$. This
 gives $\vert A_{2}\vert -\vert A'_{2}\vert \leq 2m /\sqrt{2}$ which contradicts the equality
  $\vert A_{2}\vert -\vert A'_{2}\vert= 2m$. This contradiction shows that $\nu=\nu'$.

\subsection{Remarks} (i) It is easy to extend the argument above to show that the conclusion of Theorem~\ref{thm-one}(a) holds also for $\vert A\vert=4$.

(ii) Let in the proof above $\vert A\vert=\vert A'\vert=n$ be  a
positive power of $2$ and $\nu=-1,\nu'=1$. Formula~\eqref{eq-even}
with $k=2\ell$, where $\ell\geq 3$ is odd, shows that
$$\vert A_{2\ell}\vert-n^{1/2}\vert A_{\ell}\vert^{1/2}\zeta_{\ell}(\chi)=2n\vert L_{2\ell}\vert_{\cc}=
2n\vert L_{2\ell}\vert_{\cc'}=\vert A'_{2\ell}\vert+n^{1/2}\vert
A'_{\ell}\vert^{1/2}\zeta_{\ell}(\chi').$$ But now $A_{\ell}=\{0\}$,
so $\vert A_{\ell}\vert=1$, $\vert A_{2\ell}\vert=\vert A_{2}\vert$
and similarly for $A'$. This  gives $\vert A_{2}\vert- \vert
A'_{2}\vert=n^{1/2}(\zeta_{\ell}(\chi')+\zeta_{\ell}(\chi))$.
Comparing with the equality $\vert A_{2}\vert-\vert A'_{2}\vert=
2n^{1/2}$ obtained above, we conclude that
$\zeta_{\ell}(\chi')+\zeta_{\ell}(\chi)=2$. By Lemma~\ref{lemma-two},
this is possible if and only if $\zeta_{\ell}(\chi)=
\zeta_{\ell}(\chi')= 1$  for all odd $l\geq 3$.

(iii)   The number $\zeta_k(\chi)$ is closely related to the
Frobenius-Schur indicator  $\nu_{2k}(m)$ of the   object $m$ of the
  category $\cc=\mathcal{TY}(A,\chi,\nu)$ computed by Shimizu \cite{Sh}.
Indeed, substituting $n=2k, V=m$ in    formula (3) of \cite{Sh} and
taking into account that $\dim(\mathcal C)=2|A|$, $
\theta_m=\Delta$, $ \dim(m)= |A|^{1/2}$, we obtain
$$
\nu_{2k}(m)=\frac{1}{2|A|^{1/2}}\sum_{\mu,\Delta}\Delta^{2k}=  |A|^{-1/2}\sum_{\mu\in Q_{\chi}} [\nu\gamma(\mu)]^k=
\nu^k |A_k|^{1/2} \zeta_k(\chi)
$$
(our sign $\nu$ is equal  to Shimizu's $sgn(\tau)$).  This and
Lemma~\ref{lemma-two} give another proof of the following results of
Shimizu (see \cite{Sh}, Theorem 3.5): the number $|A_k|^{-1/2}
\nu_{2k}(m) $ is either $0$ or an 8-th complex root of unity for all
$k$;   this number is   $0$ if and only if for some (and then for
any) $\mu\in Q_\chi$, there is $a=a_\chi\in A_k$ such that
$\mu(a)^k=1$. The latter claim follows   from
Lemma~\ref{lemma-twominus}, Formula~\eqref{prin}, and the equality
$A^\perp_{\chi^{-k}}=A_k$.

\section{Proof of Theorem~\ref{thm-one}(b)}\label{sec-3(a)}

\subsection{Preliminaries on bicharacters}\label{prelimbich} Any finite abelian group $\GA$ splits uniquely as a direct sum $\GA=\oplus_{p}\, \GA^{(p)}$, where $p\geq 2$
runs over all prime integers and $\GA^{(p)}$ consists of all
elements of $\GA$ annihilated by a sufficiently big power of $p$.
The group $\GA^{(p)}$ is a $p$-group, i.e., an abelian group
annihilated by a sufficiently big power of $p$.  Given a bicharacter
$\chi$ of $\GA$, we   have $\chi (\GA^{(p)}, \GA^{(p')})=1$ for any
distinct $p,p'$. Therefore the restriction, $\chi^{(p)}$, of $\chi$
to $\GA^{(p)}$ is a bicharacter and we have an orthogonal splitting
$(\GA,\chi)=\oplus_{p} \, (\GA^{(p)} , \chi^{(p)})$.

Fix a prime integer $p\geq 2$ and recall the properties of
bicharacters on $p$-groups, see, for example, \cite{De} for a
survey.   Given a bicharacter $\chi$ on a finite abelian $p$-group
$\GA$, there is an orthogonal splitting $(\GA,\chi)=\oplus_{{s}\geq
1} (\GA_{s}, \chi_{s})$, where $\GA_{s}$ is a direct sum of several
copies of $\Z/p^{{s}}\Z$ and $\chi_{s}:\GA_{s} \times \GA_{s}\to
S^1$ is a bicharacter. The rank of $\GA_{s}$ as a
$\Z/p^{{s}}\Z$-module depends only on $\GA$ and is denoted
${r}_{p,s} (\GA)$.

Assume from now on that $p\neq 2$. Then the splitting
$(\GA,\chi)=\oplus_{{s}\geq 1} (\GA_{s}, \chi_{s})$ is unique up to
isomorphism and each   $\chi_{s}$ is  an orthogonal sum of
bicharacters on ${r}_{s} (\GA)$ copies of the cyclic abelian group
$\Z/p^{{s}}\Z$.  Using the canonical injection
$\Z/p^{{s}}\Z\hookrightarrow S^1, z\mapsto e^{2\pi i z/p^{{s}}}$, we
can view $\chi_{s}$ as a pairing with values in the
ring~$\Z/p^{{s}}\Z$. This allows us to consider the determinant
$\det\chi_{s}\in \Z/p^{{s}}\Z$ of $\chi_{s}$. Since $\chi_{s}$ is
non-degenerate, $\det \chi_{s} $  is coprime with $p$. Let
$$\sigma_{p, s}(\chi)=(\frac{\det \chi_{s} }{p})  \in \{\pm 1\}
$$ be the corresponding Legendre symbol.  Recall that for    an
integer $d$ coprime with~$p$,   the   Legendre symbol
$(\frac{d}{p})$ is equal to $1$ if $d \,(\modu\ p)$ is a quadratic
residue
  and to $-1$
  otherwise, see, for example, \cite{IR}.
   If ${r}_{p,s} (\GA )=0$,
then by definition $\sigma_{p, s}=1$. It follows   from the
definitions that the integers $\{{r}_{p,s}\}_{s}$ are additive and
the signs $\{\sigma_{p,s}\}_{s}$ are multiplicative with respect to
orthogonal summation of bicharacter pairs. A theorem due to H.
Minkowski, E. Seifert, and C.T.C.\ Wall says that these invariants
form a complete system:  two bicharacters, $\chi_1$ and $\chi_2$, on
$p$-groups $\GA_1$ and $\GA_2$, respectively, are isomorphic if and
only if ${r}_{p,s}(\GA_1)={r}_{p,s}(\GA_2)$ and
$\sigma_{p,s}(\chi_1)=\sigma_{p,s}(\chi_2)$ for all ${s}\geq 1$.

For shortness, when  $p$ is specified, we   denote ${r}_{p,s} (\GA)$
and $\sigma_{p, s}(\chi)$ by $r_s(\GA)$ and $\sigma_{ s}(\chi)$,
respectively.

\subsection{Computation of $\zeta_k$}\label{compos}  Consider the $\C$-valued invariants $\{\zeta_k\}_{k\geq 1}$ of
bicharacters defined in the introduction. It is easy to deduce from
the definitions that $ \zeta_k(\chi\oplus\chi') = \zeta_k(\chi)\,
\zeta_k(\chi')  $  for any bicharacters $\chi, \chi'$  and any $k$.
Thus, the formula $\chi\mapsto\zeta_k(\chi)$ defines a
multiplicative function from the semigroup of bicharacter pairs
(with operation being the orthogonal sum $\oplus$) to $\C$.

Fix an odd prime   $p \geq 3$. We now compute $\zeta_k$ on the
bicharacters on $p$-groups. For any
 odd  integer $a$, set    $\varepsilon_{a}=i=\sqrt{-1}$ if $a\equiv 3 (\modu\ 4)$ and $\varepsilon_{a}=1$ otherwise.
For any integers $  \ellk, s \geq
  1$, we have $gcd(\ellk,p^s)=p^t$ with
$0\leq t \leq s$.   Set
$$ \alpha_{ \ellk,s}= \ellk s + s-t\quad {\text {and}} \quad
\beta_{ \ellk,s}=
\frac{\varepsilon^\ellk_{p^s}}{\varepsilon_{p^{s-t}}} \,
(\frac{h}{p})^{\ellk s + s-t} \, (\frac{\ellk'}{p})^{s-t}  \in \{\pm
1, \pm i\},$$ where $h=({p^s+1})/{2}\in \Z$ and $\ellk'=
{\ellk}/{p^t}\in \Z$.   Note that $gcd(h,p)=1$ so that the Legendre
symbol $(\frac{h}{p})$ is defined. If $t<s$, then $gcd(\ellk',p)=1$
so that the Legendre symbol $(\frac{\ellk'}{p})$ is defined; if
$t=s$, then by definition, $(\frac{\ellk'}{p})^{s-t}=1$.

\begin{lemma} \label{zetal}
For any $  \ellk \geq 1$  and any bicharacter $\chi$ on   a
$p$-group $\GA$,
\begin{equation} \label{zetal1}
\zeta_\ellk(\chi)= \prod_{s\geq 1} \, \beta_{ \ellk, s}^{r_{s}(\GA)}\, [\sigma_{s} (\chi)]^{\alpha_{ \ellk,s}}.
\end{equation}
\end{lemma}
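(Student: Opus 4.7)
The plan is to exploit the multiplicativity of $\zeta_k$ noted in Section~\ref{compos}, which reduces the problem to computing $\zeta_k(\chi)$ on a single bicharacter on $\Z/p^s\Z$, and then to invoke the classical evaluation of quadratic Gauss sums. Using the orthogonal decomposition $(A, \chi) = \bigoplus_s \bigoplus_{j=1}^{r_s(A)} (\Z/p^s\Z, \chi_{s,j})$ recalled in Section~\ref{prelimbich}, multiplicativity turns~\eqref{zetal1} into the cyclic assertion: for the bicharacter $\chi(a,b) = e^{2\pi i d a b/p^s}$ on $\Z/p^s\Z$ with $\gcd(d,p)=1$ (so $\sigma_s(\chi) = (\tfrac{d}{p})$) we have $\zeta_k(\chi) = \beta_{k, s}\,\sigma_s(\chi)^{\alpha_{k, s}}$. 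Taking the product over $j$ and $s$ and invoking the multiplicativity of $\sigma_s$ on orthogonal sums recorded in Section~\ref{prelimbich} then assembles~\eqref{zetal1}.

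To handle the cyclic case I would apply Formula~\eqref{prin} with the explicit homogeneous quadratic refinement $\mu_0(a) = e^{2\pi i h d a^2/p^s}$, where $h = (p^s+1)/2$ is an integer satisfying $2h \equiv 1 \pmod{p^s}$. Then $\gamma(\mu_0)$ is a textbook quadratic Gauss sum on $\Z/p^s\Z$, which evaluates to
$$
\gamma(\mu_0) \;=\; \varepsilon_{p^s}\,\Bigl(\tfrac{hd}{p^s}\Bigr) \;=\; \varepsilon_{p^s}\,\Bigl(\tfrac{h}{p}\Bigr)^{\!s}\sigma_s(\chi)^{\!s},
$$
after using the multiplicativity of the Jacobi symbol $(\tfrac{\cdot}{p^s}) = (\tfrac{\cdot}{p})^s$.

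For $\gamma(\mu_0^{-k})$, write $k = p^t k'$ with $\gcd(k', p) = 1$, so $A^\perp_{\chi^{-k}} = A_k = p^{s-t}\Z/p^s\Z$. One checks directly from the formula for $\mu_0$ that $\mu_0^{-k}$ is trivial on this annihilator (the key input being $2h \equiv 1 \pmod{p^{s-t}}$), so by Lemma~\ref{lemma-twominus} the sum descends to a non-degenerate Gauss sum on the quotient $\Z/p^{s-t}\Z$. The textbook formula then yields $\gamma(\mu_0^{-k}) = \varepsilon_{p^{s-t}}(\tfrac{-hdk'}{p})^{s-t}$; splitting the Jacobi symbol and using the elementary identity $(\tfrac{-1}{p})^{s-t}\varepsilon_{p^{s-t}} = \varepsilon_{p^{s-t}}^{-1}$ (checked case-by-case on $p^{s-t} \bmod 4$) collapses the prefactor to $\varepsilon_{p^{s-t}}^{-1}$. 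Multiplying $\gamma(\mu_0^{-k})\,\gamma(\mu_0)^k$ and collecting the powers of $(\tfrac{h}{p})$, $\sigma_s(\chi) = (\tfrac{d}{p})$, and $(\tfrac{k'}{p})$ then produces precisely $\beta_{k, s}\,\sigma_s(\chi)^{\alpha_{k, s}}$.

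The main obstacle is the careful bookkeeping of signs and eighth roots of unity: the minus sign in $\mu_0^{-k}$ must combine with $\varepsilon_{p^{s-t}}$ to produce the clean factor $\varepsilon_{p^s}^k/\varepsilon_{p^{s-t}}$ appearing in $\beta_{k, s}$, and the cumulative exponent of $(\tfrac{h}{p})$ has to compile to exactly $\alpha_{k, s} = ks + s - t$. The edge case $t = s$ (when $p^s \mid k$) fits under the same rubric once one observes that $\mu_0^{-k}\equiv 1$ and $\gamma(\mu_0^{-k}) = 1$, consistent with the conventions $\varepsilon_{p^0} = \varepsilon_1 = 1$ and $(\tfrac{k'}{p})^{0} = 1$ built into $\beta_{k, s}$.
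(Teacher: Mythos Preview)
Your proposal is correct and follows essentially the same route as the paper: reduce by multiplicativity to the cyclic case $A=\Z/p^s\Z$, take the homogeneous refinement $\mu_0(a)=e^{2\pi i h d a^2/p^s}$ with $h=(p^s+1)/2$, evaluate $\gamma(\mu_0)$ and $\gamma(\mu_0^{-k})$ via the classical quadratic Gauss sum, and plug into Formula~\eqref{prin}. The only cosmetic difference is that the paper handles the minus sign in $\mu_0^{-k}$ by complex conjugation (using $\overline{\varepsilon_{p^{s-t}}}=\varepsilon_{p^{s-t}}^{-1}$) rather than via your Legendre-symbol identity $(\tfrac{-1}{p})^{s-t}\varepsilon_{p^{s-t}}=\varepsilon_{p^{s-t}}^{-1}$, and applies the generalized Gauss formula directly on $\Z/p^s\Z$ instead of first descending to the quotient via Lemma~\ref{lemma-twominus}.
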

\begin{proof}
The proof   is based on the following classical Gauss formula: for
any integer $d$ coprime with $p$,
\begin{equation} \label{gauss}
\sum_{j=0}^{p^s-1} \exp(\frac{2\pi i}{p^s}dj^2) =p^{\frac{s}{2}}\varepsilon_{p^{s}} (\frac{d}{p})^{s}.
\end{equation}
A more general formula holds for any integer $d$: if
$gcd(d,p^s)=p^t$ with $0\leq t\leq s$ and $d'= {d}/{p^t}$, then
\begin{equation} \label{gauss+}
\sum_{j=0}^{p^s-1} \exp(\frac{2\pi i}{p^s}dj^2)=p^t\sum_{j=0}^{p^{s-t}-1} \exp(\frac{2\pi i}
{p^{s-t}}d'j^2)=p^{\frac{s+t}{2}}\varepsilon_{p^{s-t}} (\frac{d'}{p})^{s-t},
\end{equation}
where, by definition, for $t=s$,   the expression
$(\frac{d'}{p})^{s-t}$ is equal to 1.

We  now prove   \eqref{zetal1}. It is clear that both sides of
\eqref{zetal1} are multiplicative with respect to orthogonal
summation of bicharacters. The results stated in
Section~\ref{prelimbich} allow us to reduce the proof of
\eqref{zetal1} to the case where    $\GA=\Z/p^s \Z$ for some $s\geq
1$. We must prove that for any bicharacter $\chi:\GA\times \GA\to
S^1$,
\begin{equation} \label{zetal1--}
\zeta_\ellk(\chi)=   \beta_{ \ellk, s} \, [\sigma_{s} (\chi)]^{\alpha_{ \ellk,s}}.
\end{equation}

Set as above $h=({p^s+1})/{2} $ and $\ellk'= {\ellk}/{p^t} $, where
$gcd(\ellk,p^s)=p^t$ with $0\leq t \leq s$. The bicharacter $\chi$
is given by $\chi(a,b)= \exp(\frac{2\pi i}{p^s}\Delta ab)$ for all $
a,b\in \GA$, where $\Delta$ is an integer coprime with $p$. Observe
that the map $\mu_0:A\to S^1$ carrying any $a\in A$ to $
\exp(\frac{2\pi i}{p^s}h\Delta a^2)$   is a quadratic map associated
with  $ \chi$. Formula (\ref{gauss})  and the multiplicativity of
the Legendre symbol imply that
$$
\gamma(\mu_0 )=p^{-s/2}\sum_{j=0}^{p^s-1} \exp(\frac{2\pi i}{p^s}h\Delta j^2)=
\varepsilon_{p^{s}} (\frac{h}{p})^{s} (\frac{\Delta}{p})^{s}
=
\varepsilon_{p^{s}} (\frac{h}{p})^{s}\, [\sigma_s(\chi)]^s.
$$
Similarly, Formula (\ref{gauss+}) implies that
$$
 \sum_{c\in A}\mu_0(c)^{-\ellk}= \sum_{j=0}^{p^s-1}\overline{\exp(\frac{2\pi i}{p^s}\ellk h\Delta j^2)}=
p^{(s+t)/2} {\varepsilon_{p^{s-t}}^{-1}}(\frac{h}{p})^{s-t}(\frac{\ellk'}{p})^{s-t} [\sigma_s(\chi)]^{s-t}.
$$
Since $|A|=p^s$ and $|A_\ellk|=gcd(\ellk,p^s)=p^t$, we have
$$\gamma(\mu_0^{-k})=
|A|^{-1/2} |A_\ellk|^{-1/2}\sum_{c\in A}\mu_0(c)^{-\ellk}=
{\varepsilon_{p^{s-t}}^{-1}}(\frac{h}{p})^{s-t}(\frac{\ellk'}{p})^{s-t}
[\sigma_s(\chi)]^{s-t}.$$ These computations and
Formula~\eqref{prin} imply that  $$ \zeta_\ellk(\chi)= \gamma
(\mu_0^{-k})\, (\gamma (\mu_0))^k=
\frac{\varepsilon^\ellk_{p^s}}{\varepsilon_{p^{s-t}}}
(\frac{h}{p})^{\ellk s + s-t}(\frac{\ellk'}{p})^{s-t}
[\sigma_s(\chi)]^{\ellk s + s-t}.
$$
This is equivalent to Formula~\eqref{zetal1--}.
\end{proof}

Note one   special case of Lemma~\ref{zetal}: if $k$ is divisible by
$2\vert A\vert$, then $\zeta_\ellk(\chi)= \prod_{s\geq 1} \, \beta_{
\ellk, s}^{r_{s}(\GA)}$. Indeed, in this case for all $s$ such that
$\Z/p^s \Z$ is a direct summand of $A$, we have $gcd(\ellk,p^s)=p^s$
and $\alpha_{ \ellk,s}  = \ellk s \in 2\Z$. For all other $s$, we
have $\sigma_{s} (\chi)=1$. Therefore $[\sigma_{s} (\chi)]^{\alpha_{
\ellk,s}}=1$ for all $s$.

\subsection{Proof of Theorem~\ref{thm-one}(b)} We begin with a few  remarks concerning the subgroups $(A_k)_{k }$
of $A$ defined in the introduction. Using the splitting $A=\oplus_p
\, A^{(p)}$, one easily checks that $A_{kl}=A_k \oplus A_l$ for any
relatively prime integers $k, l$. For any prime $p$, the integers
$(\vert A_{p^m}\vert)_{m\geq 1}$ depend only on the group $A^{(p)}$
and determine the isomorphism class of $A^{(p)}$.   Indeed,
$A^{(p)}=\oplus_{s\geq 1}(\Z/p^s\Z)^{r_{p,s}}$ for
$r_{p,s}=r_{p,s}(A)\geq 0$. Given $m\geq 1$,
$$A_{p^m}=(A^{(p)})_{p^m}= \oplus_{s= 1}^m (\Z/p^s\Z)^{r_{p,s}}\, \oplus \, \oplus_{s>m}(\Z/p^m\Z)^{r_{p,s}}.$$
Hence,
$$ \log_p (\vert A_{p^{m+1}}\vert/\vert A_{p^m}\vert)=r_{p,m+1}+r_{p,m+2} + \cdots .$$
Therefore, the sequence $(\vert A_{p^m}\vert)_{m\geq 1}$ determines
  the sequence $\{r_{p,s}(A)\}_{s\geq 1}$ and so determines the isomorphism
type of $A^{(p)}$.

Formula~\eqref{eq-odd} and the assumptions of the theorem imply that, for all odd $k\geq 1$,
$$\vert A_k\vert=2n\vert L_{k} \vert_{\cc}=2n \vert L_{k}\vert_{\cc'}=\vert A'_k\vert,$$
where $n=\vert A\vert=\vert A'\vert $. By the previous paragraph,
$A^{(p)}\cong A'^{(p)}$ for all   prime  $p \neq 2$, and for all
$s\geq 1$,
\begin{equation}\label{tok}
r_{p,s}(A^{(p)})=r_{p,s}(A)=r_{p,s}(A')=r_{p,s}(A'^{(p)}).
\end{equation}
  Since $n =\prod_{p\geq 2} \vert A^{(p)}\vert $, we also have
$\vert A^{(2)}\vert=\vert A'^{(2)}\vert$.

Let $N\geq 2$ be a  positive   power of 2 annihilating both
$A^{(2)}$ and $A'^{(2)}$. Then $A_{{N}}= A^{(2)}$ and $A'_{{N}}=
A'^{(2)}$. For any odd integer $\ell\geq 1$,
$$\vert A_{{N}\ell}\vert=\vert A_{{N}}\vert\, \vert A_\ell\vert=\vert A^{(2)}\vert\, \vert A_\ell\vert= \vert A'^{(2)}\vert\,
 \vert A'_\ell\vert=\vert A'_{{N}}\vert\,\vert A'_\ell\vert=\vert A'_{{N}\ell}\vert.$$
Similarly, $\vert A_{2N\ell}\vert=\vert A'_{2N \ell} \vert$.
Applying \eqref{eq-even} to $k=2N \ell$, we obtain   $\zeta_{ {N}
\ell} (\chi)=\zeta_{ {N} \ell} (\chi')$.

Fix from now on an odd prime $p$. The identity \eqref{tok} shows
that
 to prove that the bicharacter pairs
$(A^{(p)}, \chi^{(p)})$ and $(A'^{(p)}, \chi'^{(p)})$
  are isomorphic, it is enough to  verify that $\sigma_{ s}
  (\chi^{(p)})=\sigma_{ s}
  (\chi'^{(p)})$ for all $s\geq 1$.  Set
  $$\ell=   \frac{\vert A \vert}{\vert A^{(2)}\vert \vert A^{(p)}\vert}=
  \prod_{q\geq 3, q\neq p} \vert A^{(q)} \vert=
  \prod_{q\geq 3, q\neq p} \vert A'^{(q)} \vert=\frac{\vert A' \vert}{\vert A'^{(2)}\vert \vert A'^{(p)}\vert}, $$
  where $q$ runs over all odd primes distinct from $p$.
  Clearly, $\ell $ is an odd integer. For any $N$ as  above, $\zeta_{ {N} \ell} (\chi)=\zeta_{ {N} \ell}
  (\chi')$. Observe that
  $$\zeta_{ {N} \ell} (\chi)=\zeta_{ {N} \ell} (\chi^{(2)})
  \prod_{q\geq 3} \zeta_{ {N} \ell} (\chi^{(q)}),$$
  where $q$ runs over all odd primes. Since $N\ell$ is divisible by
  $2\vert A^{(q)} \vert$ for   $q\neq p$, the   remark at the end of
  Section~\ref{compos} implies that $ \zeta_{ {N} \ell}
  (\chi^{(q)})= \zeta_{ {N} \ell} (\chi'^{(q)}) \neq 0$ for all  $q\neq p$. Replacing if
  necessary $N$ by a bigger power of 2, we can assume that $N$ is
  divisible by $8 \vert A^{(2)} \vert=8 \vert A'^{(2)} \vert$. The last claim of Lemma~\ref{lemma-two} yields $ \zeta_{ {N} \ell}
  (\chi^{(2)})= \zeta_{ {N} \ell} (\chi'^{(2)})=1$. Combining these
  equalities, we obtain that $\zeta_{ {N} \ell} (\chi^{(p)})=\zeta_{ {N} \ell}
  (\chi'^{(p)})$. Expanding both sides as in Formula~\eqref{zetal1}
  and using Formula~\eqref{tok} and
  the inclusions $  \sigma_{ s}
  (\chi^{(p)}) ,   \sigma_{ s} (\chi'^{(p)})\in \{\pm 1\} $, we
  obtain that
  $$\prod_{{\rm {odd}}\,  s\geq 1} \sigma_{ s}
  (\chi^{(p)})= \prod_{{\rm {odd}}\, s\geq 1}  \sigma_{ s} (\chi'^{(p)}).$$
  Replacing in this argument $\ell$ by $\ell p, \ell p^2, \ell p^3 , \ldots $,   we similarly  obtain that for all odd $u\geq 1$ and   even $v\geq 2$,
$$\prod_{{\rm {odd}}\,  s\geq u} \sigma_{ s}
  (\chi^{(p)})= \prod_{{\rm {odd}}\, s\geq u}  \sigma_{ s} (\chi'^{(p)}), \quad
\prod_{{\rm {even}}\,  s\geq v} \sigma_{ s}
  (\chi^{(p)})= \prod_{{\rm {even}}\, s\geq v}  \sigma_{ s} (\chi'^{(p)}) .$$
These equalities easily imply that $\sigma_{ s}
  (\chi^{(p)})= \sigma_{ s} (\chi'^{(p)})$ for all $s$.

\end{document}